\newdefinition{definition}{Definition}
\newdefinition{remark}{Remark}
\newtheorem{theorem}{Theorem}
\newtheorem{corollary}{Corollary}
\newtheorem{proposition}{Proposition}
\newproof{proof}{Proof}
\begin{document}

\begin{frontmatter}



\title{A Note on the Isotopism of Commutative Semifields}


\author{Yue Zhou}
\ead{yue.zhou@st.ovgu.de}

\address{Department of Mathematics, Otto-von-Guericke-University
Magdeburg, \\39106 Magdeburg, Germany}

\begin{abstract}
We present an example of two isotopic but not strongly isotopic commutative semifields. This example shows that a recent result of Coulter and Henderson on semifield of order $p^n$, $n$ odd, can not be generalized to the case $n$ even.
\end{abstract}

\begin{keyword}

commutative semifield \sep isotopism \sep planar function \sep projective plane
\end{keyword}

\end{frontmatter}



\section{Introduction}\label{sec_intro}

A \emph{semifield} $F$ is an algebraic structure satisfying all the axioms for a skewfield except (possibly) associativity.
A finite field is a trivial example of a semifield. Furthermore, if $F$ does not necessarily have a multiplicative identity, then it is called a \emph{presemifield}. A semifield is not necessarily commutative or associative. However, by Wedderburn's Theorem \cite{Wedderburn1905}, in the finite case, associativity implies commutativity. Therefore, a non-associative finite commutative semifield is the closest algebraic structure to a finite field.

In the earlier literature, semifields were also called \emph{division rings} or \emph{distributive quasifields}. The study of semifields was initiated by Dickson \cite{Dickson1906}, shortly after the classification of the finite fields. Until now, semifields have become an attracting topic in many different areas of mathematics, such as difference sets, coding theory and finite geometry.

The first non-trivial semifields were constructed by Dickson \cite{Dickson1906}. In \cite{Knuth1963}, Knuth showed that the additive group of a semifield $F$ is an elementary abelian group, and the additive order of the elements in $F$ is called the characteristic of $F$. Hence, any finite semifield can be represented by $(\mathbb{F}_{p^n}, +, *)$. Here $(\mathbb{F}_{p^n}, +)$ is the additive group of the finite field $\mathbb{F}_{p^n}$ and $x*y=\varphi(x,y)$, where $\varphi$ is a mapping from $\mathbb{F}_{p^n}\times \mathbb{F}_{p^n}$ to $\mathbb{F}_{p^n}$.

On the other hand, there is a well-known correspondence, via coordinatisation, between commutative semifields and translation planes of Lenz-Barlotti type V.1, see \cite{HughesPiper}. In \cite{Albert1960}, Albert showed that two semifields coordinatise isomorphic planes if and only if they are isotopic:
\begin{definition}
    Let $F_1=(\mathbb{F}_{p^n}, +, *)$ and $F_2=(\mathbb{F}_{p^n}, +, \star)$ be two presemifields. If there exist three linearized permutation polynomials $L, M, N\in\mathbb{F}_{p^n}[x]$ such that
    $$M(x)\star N(y)=L(x*y)$$
    for any $x,y\in\mathbb{F}_{p^n}$, then $F_1$ and $F_2$ are called \emph{isotopic}, and the triple $(M,N,L)$ is an \emph{isotopism } between $F_1$ and $F_2$. Furthermore, if there exists an isotopism of the form $(N, N, L)$ between $F_1$ and $F_2$, then $F_1$ and $F_2$ are \emph{strongly isotopic}.
\end{definition}

We refer the reader to \cite{LidlNiederreiter} for more background on finite fields, in particular about linearized polynomials. Let $F=(\mathbb{F}_{p^n}, +, *)$ be a presemifield, and $a\in F$. If we define a new multiplication $\star$ by the rule
    $$(x*a)\star(a*y)=x*y,$$
we obtain a semifield $(\mathbb{F}_{p^n}, +, \star)$ with unit $a*a$. There are many semifields associated with a presemifield, but they are all isotopic.

Let $F=(\mathbb{F}_{p^n},+,*)$ be a semifield. The subsets
\begin{equation*}
\begin{aligned}
  N_l(F)=\{a\in F: (a*x)*y=a*(x*y) \text{ for all }x,y\in F\},\\
  N_m(F)=\{a\in F: (x*a)*y=x*(a*y) \text{ for all }x,y\in F\},\\
  N_r(F)=\{a\in F: (x*y)*a=x*(y*a) \text{ for all }x,y\in F\},
\end{aligned}
\end{equation*}
are called the \emph{left, middle} and \emph{right nucleus} of $F$, respectively. It is easy to check that these sets are finite fields. The subset $N(F)=N_l(F)\cap N_m(F) \cap N_r(F)$ is called the \emph{nucleus} of $F$. It is easy to see, if $F$ is commutative, then $N_l(F)=N_r(F)=N(F)$. In \cite{HughesPiper}, the geometry interpretations of these nuclei are presented.

Next, we give the definition of planar functions, which was introduced by Dembowski and Ostrom in \cite{DO1968} to describe affine planes possessing a collineation group with specific properties.
\begin{definition}
    Let $p$ be an odd prime. A function $f: \mathbb{F}_{p^n}\rightarrow\mathbb{F}_{p^n}$ is called a \emph{planar function}, or \emph{perfect nonlinear (PN)}, if for each $a\in \mathbb{F}_{p^n}^*$, $f(x+a)-f(x)$ is a bijection on $\mathbb{F}_{p^n}$.
\end{definition}

For $p=2$, if $x_0$ is a root of $f(x+a)-f(x)=b$, then $x_0+a$ is another one, hence there is no planar functions over $\mathbb{F}_{2^n}$.
A \emph{Dembowski-Ostrom} (DO) polynomial $D\in \mathbb{F}_{p^n}[x] $ is a polynomial
$$D(x)=\sum_{i,j}a_{ij}x^{p^i+p^j}\enspace .$$
Obviously, $D(x+a)-D(x)-D(a)$ is a linearized polynomial for any nonzero $a$. It can be proved that a planar DO polynomial is equivalent to a commutative presemifield with odd characteristic, see \cite{Coulter2008}. In fact, if $*$ is the presemifield product, then the corresponding planar function is $f(x)=x*x$; when the planar DO polynomial $f$ is given, then the corresponding presemifield product can be defined as
\begin{equation}\label{eq_PN->production}
    x*y=\frac{1}{2}(f(x+y)-f(x)-f(y))\enspace .
\end{equation}

A function from a finite field $\mathbb{F}_{p^n}$ to itself is \emph{affine}, if it is defined by the sum of a constant and a linearized polynomial over $\mathbb{F}_{p^n}$. There are several equivalence relations of functions for which the \emph{planar} property is invariant:
\begin{definition}
    Two functions $f$ and $g: \mathbb{F}_{p^n}\rightarrow \mathbb{F}_{p^n}$ are called
    \begin{itemize}
      \item \emph{extended affine equivalent} (EA-equivalent), if $g = l_1 \circ f \circ l_2+l_3$, where $l_1, l_2$ and $l_3$ are affine functions, and where $l_1,l_2$ are permutations of $\mathbb{F}_{p^n}$. Furthermore, if $l_3$ is the zero mapping, then $f$ and $g$ are called \emph{affine equivalent};
      \item \emph{Carlet-Charpin-Zinoviev equivalent} (CCZ-equivalent or graph equivalent), if there is some affine permutation $L$ of $\mathbb{F}_{p}^{2n}$, such that $L(G_f)=G_g$, where $G_f=\{(x,f(x):x\in\mathbb{F}_{p^n})\}$ and $G_g=\{(x,g(x):x\in\mathbb{F}_{p^n})\}$.
    \end{itemize}
\end{definition}

Generally speaking, EA-equivalence implies CCZ-equivalence, but not vice versa, see \cite{Budaghyan06}. However, if planar functions $f$ and $g$ are CCZ-equivalent, then they are also EA-equivalent \cite{B-H2008,EA=CCZ_PN2008}.  Because of the correspondence between commutative presemifields with odd characteristic and planar functions as we mentioned above, the strong isotopism of two commutative presemifields is equivalent to the affine equivalence of the corresponding  planar DO functions, which we call directly the \emph{equivalence} of planar DO functions.

\section{Isotopism $\neq$ Strong Isotopism}
In \cite{Coulter2008}, Coulter and Henderson proved the following theorem.
\begin{theorem}\label{th_CoulterHenderson}
    Let $F_1 =(\mathbb{F}_q, +, \star)$ and $F_2 =(\mathbb{F}_q, +, *)$ be isotopic commutative semifields. Then there exists an isotopism $(M, N, L)$ between $F_1$ and $F_2$ such that either
    \begin{enumerate}
      \item $M=N$, or
      \item $M(x)\equiv \alpha\star N(x) \mod (x^q-x)$, where $\alpha \in N_m(F_1)\setminus N(F_1)$ cannot be written in the form $\alpha=\gamma\star\beta^2$ where $\gamma\in N(F_1)$ and $\beta\in N_m(F_1)$.
    \end{enumerate}
\end{theorem}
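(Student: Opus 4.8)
The plan is to start from an arbitrary isotopism and normalize it using the freedom we have in choosing the three maps. Suppose $(M_0, N_0, L_0)$ is any isotopism between $F_1$ and $F_2$, so that $M_0(x)\star N_0(y)=L_0(x*y)$ for all $x,y$. Since $F_2$ is commutative, we have $M_0(x)\star N_0(y)=N_0(y)\star M_0(x)=L_0(y*x)=L_0(x*y)$ using commutativity of $F_1$ as well; this symmetry is the crucial input and should let me compare the isotopism with its ``transpose'' $(N_0,M_0,L_0)$. My first step is therefore to verify that if $(M,N,L)$ is an isotopism, then so is $(N,M,L)$, exploiting commutativity of both semifields.

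Next I would set $h=N^{-1}\circ M$ (using that these are linearized permutations, hence invertible as maps on $\mathbb{F}_q$), which measures the failure of the isotopism to be strong. The aim is to show $h$ can be taken to be multiplication by a middle-nucleus element. Concretely, after fixing units by replacing the presemifields with isotopic semifields having identities, I expect to derive a functional equation of the form $N(x)\star N(h(y))=N(h(x))\star N(y)$, or something that forces $h$ to act as an element $\alpha$ of $N_m(F_1)$ via the identification coming from $N$. The idea is that $M$ and $N$ differing by a middle-nucleus factor is exactly what condition~(2) encodes, and that if $\alpha$ happens to lie in $N(F_1)$ one can absorb it into $L$ and reduce to case~(1) with $M=N$.

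The subtler part is the arithmetic squeeze at the end: showing we can always arrange either $M=N$ or an $\alpha$ that is genuinely not of the form $\gamma\star\beta^2$ with $\gamma\in N(F_1)$, $\beta\in N_m(F_1)$. Here I would argue by contradiction: if every admissible $\alpha$ could be written as $\gamma\star\beta^2$, then replacing $N$ by $N\circ(\text{mult by }\beta)$ and adjusting $L$ by the central factor $\gamma$ should convert the isotopism into one with $M=N$, landing us back in case~(1). The squaring $\beta^2$ should emerge because the planar function $f(x)=x*x$ forces a quadratic relation when we symmetrize, or equivalently because replacing $N$ by $N\circ\beta$ changes the product by $\beta^2$ on the diagonal. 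So the logical structure is: either $M=N$ outright, or the obstruction is an $\alpha\in N_m\setminus N$; and among such $\alpha$ we may, after modifying by a square from $N_m$ and a central factor, assume $\alpha$ is not of the excluded form, since otherwise the modification collapses to case~(1).

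The main obstacle I anticipate is bookkeeping the interaction between the three normalizations simultaneously. Each adjustment (changing $N$ by a nucleus factor, absorbing a central element into $L$, using commutativity to transpose $M$ and $N$) interacts with the others, and I must check that the nucleus computations $N_m(F_1)$ and $N(F_1)$ are preserved under the isotopy identifications so that the statement about $\alpha$ is well defined. In particular, verifying that the factor $\beta^2$ rather than $\beta$ is what appears, and that $\gamma$ genuinely lands in the full nucleus $N(F_1)$ and not merely in one of the one-sided nuclei, is where the commutativity hypothesis must be used most carefully; I expect this to be the technical heart of the argument.
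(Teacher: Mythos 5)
A preliminary remark: the paper you were given never proves this theorem --- it is quoted from Coulter and Henderson \cite{Coulter2008} --- so there is no in-paper argument to compare against, and I judge your outline against what a complete proof requires. Your skeleton is in fact the correct one and matches Coulter--Henderson's strategy: by commutativity of both products the transposed triple $(N,M,L)$ is again an isotopism; combining the two gives a symmetric functional equation; plugging in the preimage of the identity shows the discrepancy $h=N^{-1}\circ M$ is a multiplication map, and resubstituting shows the multiplier lies in a middle nucleus; finally, factors of the form $\gamma\star\beta^2$ are removed by composing with the autotopisms $(x\mapsto\beta\star x,\ x\mapsto\beta^{-1}\star x,\ \mathrm{id})$ and $(x\mapsto\gamma\star x,\ \mathrm{id},\ x\mapsto\gamma\star x)$ of $F_1$ --- this is exactly where $\beta^2$ rather than $\beta$ appears, as you correctly guessed --- and your dichotomy logic is sound once one observes that $\{\gamma\star\beta^2\}$ is a subgroup of $N_m(F_1)^*$ containing $N(F_1)^*$.

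The genuine gap is the step you defer to ``the identification coming from $N$''. Carry out your own computation under the paper's Definition 1: the isotopism reads $M(x)*N(y)=L(x\star y)$, the symmetrized equation is $M(x)*N(y)=M(y)*N(x)$ (in the product of $F_2$, not $\star$ as you wrote), and setting $y=N^{-1}(e_2)$ yields $M(x)=N(x)*c$ with $c\in N_m(F_2)$: what falls out is multiplication, \emph{in $F_2$}, by an element of the middle nucleus \emph{of $F_2$}. The theorem, however, asserts $M(x)=\alpha\star N(x)$ with $\alpha\in N_m(F_1)\setminus N(F_1)$, and the exclusion condition $\alpha\neq\gamma\star\beta^2$ --- the very thing this paper exploits computationally --- lives inside $F_1$. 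Bridging the two sides is not a routine check: one must prove that the isotopism intertwines the middle nuclei, i.e.\ that expanding $L((x\star d)\star y)=L(x\star(d\star y))$ for $d\in N_m(F_1)$ through the isotopism gives a field isomorphism $t:N_m(F_1)\to N_m(F_2)$ with $M(x\star d)=M(x)*t(d)$ and $N(d\star y)=t(d)*N(y)$, and only then can the discrepancy be pulled back to $F_1$ and the $\beta^2/\gamma$ bookkeeping be performed there. (Equivalently, one can adopt the opposite convention for ``isotopism'', $M(x)\star N(y)=L(x*y)$, under which the whole argument runs natively in $F_1$; the paper's definition and its quotation of the theorem are not consistent on this point, and your proposal inherits the ambiguity.) Without this intertwining lemma your plan cannot place $\alpha$ in $N_m(F_1)$ at all, so neither condition in case (2) can even be formulated; supplying it is the technical heart of the proof, not bookkeeping.
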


It implies that any commutative semifield can generate at most two non-strongly isotopic commutative semifields. Some important corollaries are also presented in \cite{Coulter2008}, for example,
\begin{corollary}
    Any two commutative semifields of order $p^e$ with $e$ odd are isotopic if and only if they are strongly isotopic.
\end{corollary}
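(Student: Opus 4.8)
The \emph{if} direction is immediate from the definitions: an isotopism of the special form $(N,N,L)$ is in particular an isotopism, so strongly isotopic semifields are certainly isotopic. The whole content lies in the converse, and the plan is to extract it from Theorem~\ref{th_CoulterHenderson}. Suppose $F_1$ and $F_2$ are isotopic commutative semifields of order $p^e$ with $e$ odd. By the theorem there is an isotopism $(M,N,L)$ satisfying case (1) or case (2). If it satisfies case (1), then $M=N$ and $(N,N,L)$ is already a strong isotopism, so we are done. Hence everything reduces to showing that, when $e$ is odd, case (2) simply cannot occur, i.e.\ there is \emph{no} $\alpha\in N_m(F_1)\setminus N(F_1)$ failing to be of the form $\gamma\star\beta^2$ with $\gamma\in N(F_1)$ and $\beta\in N_m(F_1)$; the isotopism produced by the theorem is then forced into case (1).

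The first step is to translate case (2) into a clean statement about finite fields. Writing $N_m:=N_m(F_1)$ and $N:=N(F_1)$, these are finite fields under $\star$ by the remark in the Introduction, and since $F_1$ is commutative we have $N\subseteq N_m$; put $N_m=\mathbb{F}_{p^a}$ and $N=\mathbb{F}_{p^b}$, where $\star$ restricts to the field multiplication on $N_m$ and $\beta^2=\beta\star\beta$. As $N\subseteq N_m\subseteq F_1$ is a tower of fields, $b\mid a\mid e$. The elements expressible as $\gamma\star\beta^2$ are exactly $\{0\}\cup H$, where $H:=N^{*}\cdot(N_m^{*})^{2}$ is a subgroup of the multiplicative group $N_m^{*}$ (squares and products taken in $N_m$). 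Thus case (2) is impossible precisely when $H=N_m^{*}$, and it suffices to prove this equality under the hypothesis that $e$ is odd.

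The heart of the matter is then a parity computation. If $p=2$ the squaring map is an automorphism of $N_m$, so $(N_m^{*})^{2}=N_m^{*}$ and $H=N_m^{*}$ trivially. If $p$ is odd, $(N_m^{*})^{2}$ has index $2$ in $N_m^{*}$, so $H=N_m^{*}$ if and only if $N^{*}$ contains a non-square of $N_m$. Fixing a generator $g$ of $N_m^{*}$, one has $N^{*}=\langle g^{d}\rangle$ with $d=(p^{a}-1)/(p^{b}-1)=\sum_{i=0}^{a/b-1}p^{bi}$, and $g^{d}$ is a non-square exactly when $d$ is odd. Since $a\mid e$ and $b\mid a$ with $e$ odd, both $a$ and $b$ are odd, whence $a/b$ is odd; then $d$ is a sum of an odd number of odd summands and is therefore odd. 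Consequently $g^{d}\in N^{*}$ is a non-square, so $H$ contains both squares and a non-square and must equal $N_m^{*}$.

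I expect the main obstacle to be conceptual rather than computational: one must correctly identify $\star$ with the field multiplication on the nucleus tower $N\subseteq N_m$ so that the algebraic condition in case (2) becomes a genuine statement about squares in $N_m$. Once that reduction is in place, the arithmetic fact that $(p^{a}-1)/(p^{b}-1)$ is odd whenever $a/b$ is odd finishes the argument, and the hypothesis that $e$ is odd enters exactly at the point of forcing $a/b$ to be odd.
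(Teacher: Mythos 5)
Your proof is correct, and it follows essentially the same route as the source: the paper itself states this corollary without proof (it is quoted from \cite{Coulter2008}), and the intended derivation is exactly yours --- use Theorem~\ref{th_CoulterHenderson}, then rule out case (2) for $e$ odd by showing $N^{*}\cdot(N_m^{*})^{2}=N_m^{*}$ via the parity of $(p^{a}-1)/(p^{b}-1)$. One small imprecision: $N\subseteq N_m\subseteq F_1$ is not a ``tower of fields'' since $F_1$ need not be associative; the divisibility $a\mid e$ instead follows because $F_1$ is a vector space over its middle nucleus $N_m$, after which your argument goes through unchanged.
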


Pieper-Seier and Spille  \cite{pieper-seier_remarks_1999} showed that the Cohen-Ganley commutative semifield \cite{Cohen-Ganley1982} has exactly two classes of strong isotopy. In this paper, we present another example\footnote{In the previous version of this paper, we have claimed that our example is the first one. However, later Coulter and Knarr informed us about the result from \cite{pieper-seier_remarks_1999}}.

First, we introduce a family of planar functions:
$$\frac{1}{2}(\mathrm{Tr}(x^2)+G(x^{q^2+1}))$$
over $\mathbb{F}_{q^{2m}}$, where $q$ is a power of an odd prime $p$, $m=2k+1$, $\mathrm{Tr}(\cdot)$ is the trace function from $\mathbb{F}_{q^{2m}}$ to $\mathbb{F}_{q^m}$, and $G(x)=h(x-x^{q^m})$, where $h\in \mathbb{F}_{q^{2m}}[x]$ is defined as
$$h(x) = \sum_{i=0}^{k}(-1)^i x^{q^{2i}}+\sum_{j=0}^{k-1}(-1)^{k+j}x^{q^{2j+1}}\enspace.$$
This planar function family corresponds to Bierbrauer's generalization of the semifield discovered by Lunardon, Marino, Polverino and Trombetti over $q^{6}$, see \cite{Bierbrauer2009-3,LMPT2009}. Hence the corresponding semifield  should be called Lunardon-Marino-Polverino-Trombetti-Bierbrauer (LMPTB) semifields \cite{Bierbrauer2009-3}.

Let $q=3$, $m=3$ and $\mathbb{F}_{3^6}=\mathbb{F}_{3}(\xi)$, where $\xi$ is a root of $x^6-x^4+x^2-x-1\in \mathbb{F}_{3}[x]$. Let $F_1=(\mathbb{F}_{3^6}, +, \star)$ be the LMPTB semifield. By MAGMA\cite{Magma}, we calculate that $| N_m(F_1)| =3^2$ and $| N(F_1)| =3$, and there are four $\alpha \in N_m(F_1)\setminus N(F_1)$, which cannot be written in the form $\alpha=\gamma\star\beta^2$, where $\gamma\in N(F_1)$ and $\beta\in N_m(F_1)$. They are $\lambda, \lambda^3, \lambda^5$ and $\lambda^7$, where $\lambda=\xi^{91}$.

Now, we can define another semifield $F_2$ with the multiplication given by
$$x\odot y=(\lambda\star x)\star y$$
Obviously, $F_1$ and $F_2$ are isotopic. As we mentioned above, to tell whether $F_1$ and $F_2$ are strongly isotopic, we just need to calculate whether $f_1(x)=x\star x$ is equivalent to $f_2(x)=x\odot x$. By Lagrange interpolation, we have
\begin{equation*}
    f_1(x)=x^{270} - x^{246} + x^{90} - x^{82} - x^{54} + x^{30} - x^{10} -
    x^{2},
\end{equation*}
\begin{equation*}
    f_2(x)= \lambda^3(x^{270} - x^{246} - \lambda^2x^{90} + \lambda^2x^{82} -
       x^{54} + x^{30} + \lambda^2x^{10} + \lambda^2x^2)\enspace .
\end{equation*}

Let $f:\mathbb{F}_{p^n}\rightarrow \mathbb{F}_{p^n}$ be any function. Since the additive group of $\mathbb{F}_{p^n}$ is the linear space $\mathbb{F}_p^n$, $f$ can also be considered as a mapping from $\mathbb{F}_p^n$ to itself. Define a matrix $M_f\in \mathbb{F}_p^{(2n+1, p^n)}$ as follows:
\begin{equation}\label{eq_Mf}
  M_f=\left(
               \begin{array}{ccc}
                 \cdots & 1 & \cdots \\
                 \cdots & x & \cdots \\
                 \cdots & f(x) & \cdots \\
               \end{array}
             \right)_{x\in\mathbb{F}_p^n}
\end{equation}
Then we can construct a linear code $C_f$ over $\mathbb{F}_p$ by the generator matrix $M_f$. Furthermore, it can be proved that
\begin{proposition}
     Let $p$ be a prime, and $n$ be an integer. Two functions $f,g:\mathbb{F}_{p^n}\rightarrow \mathbb{F}_{p^n}$ are CCZ-equivalent, if and only if the corresponding codes $C_f$ and $C_g$ are permutation equivalent.
\end{proposition}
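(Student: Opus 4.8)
The plan is to translate both notions into statements about the \emph{columns} of the generator matrix. Each column of $M_f$ is the homogenized graph point $(1,x,f(x))^{\mathsf T}\in\mathbb{F}_p^{2n+1}$, and a codeword of $C_f$ is exactly the evaluation $x\mapsto a+\langle u,x\rangle+\langle v,f(x)\rangle$ of an affine functional $(a,u,v)$ of $\mathbb{F}_p^{2n}$ on the graph $G_f$; thus $C_f$ is the image of the evaluation map from the $(2n+1)$-dimensional space of affine functionals, the first row of $M_f$ recording the constant functional. I would first record the elementary fact that the rows of $M_f$ are linearly independent precisely when no nonzero affine functional vanishes on $G_f$ (equivalently, the columns span $\mathbb{F}_p^{2n+1}$), a condition that holds for the planar functions of interest, and flag the degenerate case separately. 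The natural bridge between the two equivalences is the homogenization $\tilde L(t,w)=(t,\,tb+Aw)$ of an affine map $L(w)=Aw+b$ of $\mathbb{F}_p^{2n}$: it is the unique linear extension of $L$ to $\mathbb{F}_p^{2n+1}$ fixing the distinguished first coordinate, and it is invertible exactly when $A$ is.

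\textbf{Forward direction.} Given a CCZ-equivalence, i.e.\ an affine permutation $L$ with $L(G_f)=G_g$, I would observe that $L$ carries graphs to graphs, so $L(x,f(x))=(\sigma(x),g(\sigma(x)))$ for a permutation $\sigma$ of $\mathbb{F}_p^n$. Applying the homogenization $\tilde L$ then sends the $x$-th column $(1,x,f(x))^{\mathsf T}$ of $M_f$ to the $\sigma(x)$-th column of $M_g$. Since $\tilde L$ is an invertible linear map and $\sigma$ a permutation, $M_g$ is obtained from $M_f$ by an invertible row transformation followed by the coordinate permutation $\sigma$; hence $C_f$ and $C_g$ share a row space up to $\sigma$, which is exactly permutation equivalence.

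\textbf{Reverse direction.} Starting from a permutation equivalence I would recover the ambient linear map. A coordinate permutation $\sigma$ carrying $C_f$ onto $C_g$ induces a linear isomorphism $\phi\colon C_f\to C_g$ matching coordinate functionals, $\mathrm{eval}_x=\mathrm{eval}_{\sigma(x)}\circ\phi$; dualizing this, and using that the columns span so that evaluation identifies functionals with the ambient space, produces an invertible $Q$ on $\mathbb{F}_p^{2n+1}$ with $Q(1,x,f(x))^{\mathsf T}=(1,\sigma(x),g(\sigma(x)))^{\mathsf T}$. Because every column of $M_g$ has first entry $1$, the top row $r_0$ of $Q$ satisfies $r_0\cdot(1,x,f(x))^{\mathsf T}=1$ for all $x$; in the full-rank case this forces $r_0=(1,0,\dots,0)$. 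With this normalization $Q$ is block lower triangular with diagonal blocks $1$ and a $2n\times 2n$ block $A$, so $\det Q=\det A$ and $A$ is invertible; reading $L(w)=Aw+b$ off the lower rows of $Q$ yields an affine permutation with $L(x,f(x))=(\sigma(x),g(\sigma(x)))$, i.e.\ $L(G_f)=G_g$, the desired CCZ-equivalence.

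\textbf{Main obstacle.} The forward direction is routine once the homogenization is set up; the content lies in the reverse direction, and two points need care. First, passing from the abstract code isomorphism $\phi$ to an honest linear map $Q$ relating the columns is immediate when the columns span the whole space, but in general one must handle nonzero affine functionals vanishing on $G_f$ (the non-full-rank case), where $Q$ is pinned down only modulo the left kernel and the extension to an invertible map must be chosen compatibly. Second, and most importantly, I must guarantee that the affine part $A$ is invertible: this is the crux, and it is precisely the normalization of the top row of $Q$ to $(1,0,\dots,0)$ — made possible by the distinguished all-ones row shared by $C_f$ and $C_g$ and fixed by every coordinate permutation — that converts invertibility of $Q$ into invertibility of $A$. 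I expect the bookkeeping around this distinguished coordinate, together with dispatching the degenerate case, rather than any deep idea, to be the real difficulty.
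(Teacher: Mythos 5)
Your proposal is correct and follows essentially the same route as the paper: both translate permutation equivalence of $C_f$ and $C_g$ into an identity $Q\cdot M_f\cdot P=M_g$ with $Q$ invertible and $P$ a permutation matrix, then read the affine permutation of $\mathbb{F}_p^{2n}$ off the block structure of $Q$ (and, conversely, homogenize an affine permutation so that it acts on the columns $(1,x,f(x))$). If anything, you are more careful than the paper's own proof, which passes silently from invertibility of the $(2n+1)\times(2n+1)$ matrix to invertibility of the $2n\times 2n$ block $\tilde{L}$ — exactly the top-row normalization you identify as the crux — and which, like you, leaves the degenerate non-full-rank case aside (harmless here, since planar functions yield full-rank generator matrices).
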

\begin{proof}
    Assume that $C_f$ and $C_g$ are permutation equivalent, then we have a permutation matrix $P$ and a $(2n+1)\times(2n+1)$ matrix $L$ with full rank, such that
    $$L\cdot M_f\cdot P=M_g\enspace.$$
    That means there are $u,v\in\mathbb{F}_p^n$ and a matrix $\tilde{L}$ with full rank, such that
    $$\tilde{L}\cdot \left(
               \begin{array}{ccc}
                 \cdots & x & \cdots \\
                 \cdots & f(x) & \cdots \\
               \end{array}
             \right)\cdot P=\left(
               \begin{array}{ccc}
                 \cdots & x & \cdots \\
                 \cdots & g(x) & \cdots \\
               \end{array}
             \right)+\left(
               \begin{array}{c}
                 u \\
                 v \\
               \end{array}
             \right) \enspace.
    $$
    Therefore, by the definition of CCZ-equivalence, $f$ and $g$ are CCZ-equivalent. The proof of the converse is the same. \qed

\end{proof}

For the equivalence of codes, including permutation equivalence
and monomial equivalence, see \cite{HuffmanPless2003}.

It is well-known that function $f$ mapping $\mathbb{F}_{p^n}$ to itself
is planar if and only if for every nonzero $a\in\mathbb{F}_{p^n}$,
the function $\mathrm{Tr}(af(x))$ is generalized bent, see \cite{CarletDubuc1999}. For planar DO-polynomials, it is equivalent to the nonsingularity of $\mathrm{Tr}(af(x))$ as a p-ary quadratic form, for every nonzero $a\in\mathbb{F}_{p^n}$. Therefore, the weight distribution of $C_f$ can be deduced, see \cite{LiQuLing2009}, and there are only $p-1$ code words $(i,i,\cdots,i)$ with weight $p^n$ ($0<i<p$). Thus, for the codes $C_f$ and $C_g$ from the planar functions $f$ and $g$, monomial and permutation equivalence are identical. By MAGMA, we calculated that $C_{f_1}$ is not monomially equivalent to $C_{f_2}$ (MAGMA only offers the command to tell the monomial equivalence of two linear codes, that is why we emphasize the identity of monomial and permutation equivalence between $C_f$ and $C_g$). Therefore, $F_1$ is not strongly isotopic to $F_2$, which means that it is possible to construct inequivalent planar functions from known ones by the isotopism of corresponding presemifield.

%

\begin{remark}
    For Dickson \cite{Dickson1906}, Albert \cite{Albert1960}, Ganley \cite{Ganley1981} and Cohen-Ganley \cite{Cohen-Ganley1982} commutative semifields, we did not find such $\lambda$ to construct affine-inequivalent functions $f_1$ and $f_2$ as defined above on $\mathbb{F}_{3^{2m}}$, where $m=2, 3$. For the Budaghyan-Helleseth-Bierbrauer (BHB) semifields \cite{Bierbrauer2009-3,B-H2008,BudaghyanHelleseth2010} of order $3^6$, such $\lambda$ can also be found. For any other larger $m$, it is beyond our computation capacity.
\end{remark}

\begin{remark}
    We find that $f_2$ is equivalent to the planar function from BHB semifield of order $3^6$, which means BHB semifield and LMPTB semifield of order $3^6$ are isotopic but not strongly isotopic.
\end{remark}

\section*{Acknowledgement}
    We are grateful to R. Coulter and R. Knarr for pointing out the result in \cite{pieper-seier_remarks_1999}.





\bibliographystyle{model1b-num-names}
\bibliography{ref-sc}







\end{document}